\newcommand{\C}{\mathbb{C}}
\newcommand{\R}{\mathbb{R}}
\newcommand{\Z}{\mathbb{Z}}
\newcommand{\A}{\mathbb{A}}
\newcommand{\I}{\mathrm{i}}
\newcommand{\f}[1]{\mathbf{#1}}
\newtheorem{theorem}{\bf Theorem}
\newtheorem{lemma}[theorem]{\bf Lemma}
\newtheorem{proposition}[theorem]{\bf Proposition}
\newtheorem{remark}[theorem]{\bf Remark}
\newtheorem{definition}[theorem]{\bf Definition}
\newtheorem{conjecture}[theorem]{\bf Conjecture}
\journal{arXiv}
\begin{document}

\sloppy

\begin{frontmatter}

\title{Characterization of polynomial surfaces of revolution and polynomial quadrics}

\author[plzen1,plzen2]{Michal Bizzarri}
\author[plzen1,plzen2]{Miroslav L\'avi\v{c}ka}
\author[madrid]{J. Rafael Sendra}
\author[plzen1,plzen2]{Jan Vr\v{s}ek}

\address[plzen1]{Department of Mathematics, Faculty of Applied Sciences, University of West Bohemia,
         Univerzitn\'i~8,~301~00~Plze\v{n},~Czech~Republic}

\address[plzen2]{NTIS -- New Technologies for the Information Society, Faculty of Applied Sciences, University of West Bohemia, Univerzitn\'i 8, 301 00 Plze\v{n}, Czech~Republic}

\address[madrid]{Department of Mathematics, CUNEF-University, Spain}

\begin{abstract}
In this paper, we characterize the polynomiality of surfaces of revolution by means of the polynomiality of an associated plane curve. In addition, if the surface of revolution is polynomial, we provide formulas for computing a  polynomial parametrization, over $\mathbb{C}$, of the surface. Furthermore, we perform the first steps towards the analysis of the existence, and actual computation, of real polynomial parametrizations of surfaces of revolution.  As a consequence, we give a complete picture of the real polynomiality of quadrics  and we formulate a conjecture for the general case.
 \end{abstract}

\begin{keyword}
Surfaces of revolution; Profile curve; Polynomial parametrizations; Real parametrizations; Tubularization; Quadrics
\end{keyword}

\end{frontmatter}

\section{Introduction}

In many applications, such as geometric modeling, parametric representations of curves and surfaces are more appropriate than implicit ones. Among these parametrizations, rational and polynomial parametrizations are particularly important. Let us focus on some advantages of polynomial parametrizations,
the most common cases of which are B\'ezier and B-spline forms. For instance, rational parametrizations can lead to numerical instability when the parameter values approach the poles of the rational functions. We could also point out the advantages of using polynomial parametrizations instead of rational ones when solving surface integrals. Also, polynomial parametrizations appears in the study of  globally identifiable parameters in   control theory (see \cite{FOS}).

The problem of computing a birational polynomial parameterization without base points of a rational surface (if it exists) was solved in \cite{SePD2020}.  Recently, an improvement that overcomes some shortcomings of the original algorithm has appeared in \cite{PDSeBeSh2023}. The great advantage of these approaches is that they deal with a wide class of rational surfaces. On the other hand, even if universal, or almost universal, methods exist, it is always worth studying special algorithms for certain classes of objects, as these often provide simpler solutions for the selected types of problems and helps to understand theoretically the corresponding family of objects.

For this reason, we decided to study polynomial parameterizations of surfaces of revolution, since these shapes are widely used in geometric modeling and computer graphics, as well as in a wide range of engineering and design applications.  Many interesting geometric objects, such as cylinders, cones, or tori, are surfaces of this type. In addition, surfaces of revolution are often used to create 3D digital surfaces for automotive and industrial design, we can find them in architecture, or in the manufacturing of various products. Let us also mention that the class of rational revolution  surfaces   are not included in the class of surfaces studied in \cite{SePD2020} and \cite{PDSeBeSh2023} and, hence, the results presented in this paper are relevant in the study of polynomial parametrizations of surfaces. 
 In \cite{VrLa2015} the problem of rationality and unirrationality of surfaces of revolution was thoroughly investigated. However, the paper was not devoted to the study of polynomial parameterizations. Thus, the identification of polynomial surfaces of revolution is still an open problem. This paper can therefore be seen as a continuation of \citep{VrLa2015} published in the \textit{Journal of computational and applied mathematics. }
 
A surface of revolution can be seen as a surface which is  invariant under the group of rotations along a fixed axis. Alternatively, one can visualize a surface of revolution as the surface obtained  by rotating a  curve  (the cross section of the surface with a plane containing the rotation axis)    
 about the axis. We call this rotating curve the profile curve, and we denote it by $P$.  In a first approximation, it seems natural to think that the relevant information about the rationality of the surface of revolution may reside in the profile curve. However, in \cite{VrLa2015}, the authors prove that  the rationality of the surface of revolution is characterized by means of the rationality of a different curve, denoted as $P^2$,  obtained through a quadratic transformation of the profile curve $P$ (see Fig. \ref{fig:strips} and Proposition \ref{prp:rational SOR}). In this paper, we show that $P^2$ also characterizes the polynomiality  of the surface of revolution. More precisely, the main contributions of this paper are
 \begin{itemize}
 \item We prove that the polynomiality (over $\mathbb{C}$) of a surface of revolution, that is not a cylinder of revolution, is characterized by means of the polynomiality  (over $\mathbb{C}$) of the curve $P^2$ (see Theorem  \ref{thm: S polynomial iff P2 polynomial}).
 \item We provide close formulas for computing a polynomial parametrization of a surface of revolution from a polynomial parametrization of $P^2$ (see Remark \ref{eq:final}).
 \item We take the first steps in the study of the existence, and actual computation, of real polynomial parametrizations of surfaces of revolution from  the existence of real polynomial parametrizations of the curve $P^2$ (see Section \ref{sec-real-param}). As a consequence, we give a complete analysis for the case of quadrics (see Section \ref{sec-quadrics}) and we formulate a conjecture for the general case (see Subsection \ref{subsec-conjecture}). 
 \end{itemize}

\section{Motivation and preliminary results}

It turns out that every surface parametrized by polynomials is an algebraic variety, and therefore the proper language for attacking the problem presented is that of algebraic geometry. Thus, we begin by introducing standard notions that will be used throughout the manuscript. Then we provide a recapitulation of known results about polynomial curves and the related results about surfaces of revolution. Throughout this paper we will work over the field $\mathbb{C}$ of the complex numbers, although the results are valid for any algebraically closed field of characteristic zero,   and eventually over the field $\mathbb{R}$ of the real numbers, in which case it will be conveniently indicated. Indeed, we will in general work with curves/surfaces as zero-sets of polynomials over $\mathbb{C}$. We will refer to real curves/surfaces when the zero-set contains infinitely many real points in the case of a curve or an Euclidean disc of real points in the surface case. 

\begin{definition}
A~\emph{polynomial} curve/surface $X$ is a curve/surface in the affine space $\A^n$ admitting a polynomial parametrization, i.e., a dominant map $\A^k\rightarrow X$, where $k=1$ or $k=2$, given by polynomial functions.
\end{definition}

The polynomial parametrizations are special instances of rational parametrizations -- those given by rational functions. Recall that a variety admitting a rational parametrization is called \emph{unirational}.  If there exists a parametrization which is additionally birational, then we speak of a \emph{rational} variety. According to L\"{u}roth's theorem, there is no distinction between rationality and unirrationality for curves. For surfaces, the field of definition comes into play.  In particular, over the field of complex numbers $\C$, every unirational surface must also be rational (see e.g. \cite{Sch13} page 208). However, this is no longer the case if we focus on real surfaces, which are known to be rational if and only if they are unirational and the desingularization of their projective closure is connected (see e.g. \cite{Roy1998} and \cite{Comessatti1913}).

We do not make a similar distinction for polynomial parametrizations. If the parametrization happens to be bijective on the Zariski open subset of $\A^k$, then there exists its inverse, which is generally given by rational functions.  Such a map is called a birational morphism, but we prefer to use the term \emph{proper parametrization} in this situation.

The polynomial curves are well known. They are characterized as rational curves with only one point at infinity. The rational parametrization of the curve provides an effective criterion for deciding the existence of a polynomial one. Let us summarize the relevant results from \cite[Section 6.2]{SeWiDi07} in the next proposition; the result is presented over $\mathbb{R}$, but it is also valid over $\mathbb{C}$.

\begin{proposition}\label{prp:polynomial curves}
Let $C$ be a (real) polynomial curve.  Then there exists a (real) polynomial proper parametrization. Furthermore, if $\f p(t)$ and $\f q(s)$ are two such parametrizations, then $\f q(s)=\f p(as+b)$.
\end{proposition}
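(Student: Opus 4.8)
The plan is to prove the two assertions of Proposition~\ref{prp:polynomial curves} separately, since the existence of a proper polynomial parametrization is essentially a classical normalization argument, whereas the uniqueness-up-to-affine-change statement is a reparametrization argument.

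For the first claim, I would start from the fact, quoted just before the proposition, that a polynomial curve $C$ is a rational curve having exactly one point at infinity. By L\"uroth's theorem (Proposition~\ref{prp:polynomial curves} is stated for curves, where rationality and unirationality coincide) one may first pass to a \emph{proper} rational parametrization $\f r(t)=\left(\tfrac{a_1(t)}{b_1(t)},\dots,\tfrac{a_n(t)}{b_n(t)}\right)$, reducing each coordinate to lowest terms. The key step is to analyze where the denominators vanish: because $\f r$ is proper, the parameter values correspond birationally to points of $C$, and the single point at infinity must be the image of a single parameter value (or of $t=\infty$). I would normalize so that the only parameter value producing an infinite point is $t=\infty$ itself; concretely, if the common ``pole'' sits at a finite value $t_0\in\C$, I compose with the affine change $t\mapsto t+t_0$ and then, if necessary, with an inversion and a further affine map so that no finite $t$ is sent to infinity. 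After this normalization every $b_i$ must be a nonzero constant, so $\f r$ is already polynomial; and since it was obtained from a proper parametrization by a birational (indeed affine/M\"obius) reparametrization, it remains proper. This gives a proper polynomial parametrization $\f p(t)$.

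For the second claim, suppose $\f p(t)$ and $\f q(s)$ are both proper polynomial parametrizations of $C$. Properness means each is birational onto $C$, so the composition $\phi=\f p^{-1}\circ\f q$ is a birational self-map of the affine line $\A^1$, i.e. an element $s\mapsto \tfrac{\alpha s+\beta}{\gamma s+\delta}$ of $\mathrm{PGL}_2$ satisfying $\f q(s)=\f p(\phi(s))$. The decisive point is that $\phi$ must in fact be \emph{affine}: both $\f p$ and $\f q$ send the single point at infinity of $C$ to the parameter value $\infty$ (this is exactly the ``one point at infinity'' normalization achieved above), so $\phi$ fixes $\infty$, forcing $\gamma=0$ and hence $\phi(s)=as+b$ with $a\neq0$. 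Substituting gives $\f q(s)=\f p(as+b)$, as claimed.

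The main obstacle, and the part that requires genuine care rather than bookkeeping, is the normalization in the existence proof: I must argue rigorously that the hypothesis ``$C$ has a single point at infinity'' can be converted, via a M\"obius reparametrization of the proper rational parametrization, into the statement that all denominators become constant. The subtlety is to rule out the apparent possibility that several \emph{distinct} finite parameter values map to points at infinity, or that a coordinate denominator vanishes at a point whose image is nonetheless finite after cancellation; handling this cleanly is where one invokes properness (to get a bijection between parameters and curve points on a dense open set) together with the single-point-at-infinity hypothesis. Once that normalization is secured, both the polynomiality and the $\mathrm{PGL}_2$-to-affine reduction follow smoothly, and the argument over $\R$ is identical provided the proper parametrization and the reparametrizing M\"obius map can be chosen with real coefficients, which holds because the point at infinity of a real curve is real (or comes in a conjugate pair that the properness over $\R$ collapses to a real parameter value).
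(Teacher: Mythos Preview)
The paper does not supply its own proof of this proposition; it is stated as a summary of \cite[Section~6.2]{SeWiDi07}, with no argument given, so there is nothing in the paper to compare against. Your sketch is essentially the standard proof found in that reference and is correct.

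Two small points deserve tightening. In the existence step, an affine shift $t\mapsto t+t_0$ does not move a finite pole to $\infty$; you need a genuine M\"obius inversion such as $t\mapsto 1/(t-t_0)$. The clean way to see that a \emph{single} such move suffices is to factor any given polynomial parametrization $\f f$ through a proper rational one, $\f f=\f r\circ g$, via L\"uroth: since $\f f$ has no finite poles, $g(\A^1)$ must avoid every pole of $\f r$, and because $\mathbb{P}^1\setminus g(\A^1)\subset\{g(\infty)\}$ all poles of $\f r$ coincide. This is the precise content of ``one place at infinity'' and disposes of the obstacle you flag about several parameter values mapping to infinity. For the real statement, the justification is simply that complex conjugation fixes the unique place at infinity of a real curve, so that place is real; hence the normalizing M\"obius map, and consequently the affine link $\f q(s)=\f p(as+b)$ in the uniqueness part, can be taken with real coefficients.
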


The situation is much more complicated when we consider surfaces, due to the richness and complexity of Cremona transformations, i.e. birational transformations of the parametric domain. For example, two  proper polynomial parametrizations of the surface need not to be related by a polynomial reparametrization. Let $\phi:\A^2\rightarrow\A^2$ be given by $[s,t]=\phi(u,v)=[u^2-v^2,u+v]$ this is a~polynomial map which is almost everywhere bijective and thus it possesses an inversion. However
$$
[u,v] = \phi^{-1}(s,t)=\left[\frac{s+t^2}{2 t},\frac{-s+t^2}{2 t}\right],
$$ 
which is only rational. Therefore the reparametrization between two polynomial parametrizations might be rational. Moreover, it is not known which surfaces have polynomial parametrizations.  As an example, consider the following proposition from \cite{PeRo96}; in the sequel $\I$ denotes the imaginary unit.
 
\begin{proposition}\label{prp:S2 is poly}
The unit sphere $x^2+y^2+z^2=1$ can be parametrized polynomially as 
$$
[v-u(u v+1),2uv+1,\I(u(u v+1)+v)].
$$
\end{proposition}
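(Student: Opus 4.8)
The plan is to verify two things: that the given map lands in the sphere, i.e. that the three coordinate polynomials satisfy $x^2+y^2+z^2=1$ identically, and that the map is \emph{dominant}, so that it is genuinely a parametrization of the (two-dimensional) sphere in the sense of the definition above. Since this is a verification rather than an existence statement, I expect no serious obstacle; the only real work is organizing the identity check so that it does not degenerate into a messy expansion.

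For the first part I would avoid brute force by exploiting the structure of the coordinates. Writing $w=uv+1$, the first and third coordinates are $x=v-uw$ and $z=\I(uw+v)$, so that $z^2=-(uw+v)^2$ and hence
$$
x^2+z^2=(v-uw)^2-(uw+v)^2=-4uvw=-4uv(uv+1).
$$
Since the second coordinate gives $y^2=(2uv+1)^2=4u^2v^2+4uv+1$, adding the two yields $x^2+y^2+z^2=-4u^2v^2-4uv+4u^2v^2+4uv+1=1$, as required. The only place where complex numbers enter is the factor $\I$ in $z$, which is exactly what turns the sum $x^2+z^2$ into a difference of squares; this is why the parametrization is polynomial over $\C$ but not obviously over $\R$.

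For dominance, the cleanest route is to exhibit a rational inverse, which shows the map is birational onto its image and therefore has a dense, two-dimensional image inside the sphere. Forming suitable linear combinations of the coordinates eliminates the nonlinear terms: one finds $x-\I z=2v$ and $-x-\I z=2u(uv+1)=u(y+1)$, using $uv+1=(y+1)/2$. Hence
$$
v=\frac{x-\I z}{2},\qquad u=-\frac{x+\I z}{y+1},
$$
which recovers $(u,v)$ generically (away from $y=-1$). The existence of such a rational left inverse means every point of a dense open subset of the sphere lies in the image, establishing dominance; in fact it shows the map is a \emph{proper} parametrization in the terminology introduced above.

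I would present the identity computation first and the inverse second. The single idea carrying both steps is the recognition of the difference-of-squares pattern hidden in the first and third coordinates: it trivializes the check $x^2+y^2+z^2=1$ and simultaneously suggests the combinations $x\pm\I z$ used to build the inverse. I would close by remarking that the indispensable appearance of $\I$ foreshadows the genuine difficulty, treated later in the paper, of deciding when a \emph{real} polynomial parametrization exists.
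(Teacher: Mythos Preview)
Your verification is correct: the difference-of-squares trick with $w=uv+1$ cleanly yields $x^2+z^2=-4uv(uv+1)$, which cancels against $y^2-1$, and the rational inverse $v=(x-\I z)/2$, $u=-(x+\I z)/(y+1)$ establishes not just dominance but properness. The paper itself does not prove this proposition; it simply quotes the parametrization from \cite{PeRo96}, so your argument is in fact more than what the paper supplies.
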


As we can see, the parametrization is not real. In addition, it is easy to verify that the sphere cannot be parametrized by real polynomials because it is a~compact surface.

\smallskip
In the following, our goal is to characterize a class of polynomial surfaces of revolution.  Let us recall the definition.

\begin{definition}
A surface $S$ is a surface of revolution (SOR for short) if it is invariant under the group of rotations along a fixed axis.
\end{definition} 

Let $\f c(t)=[x(t),y(t),z(t)]$, where $z(t)$ is non-constant, be a parametric curve, and let the axis of rotation be the $z$-axis. Then the SOR obtained by rotating this curve about the axis can be given by
\begin{equation}\label{eq:rat param sor}
  \f s(s,t)=\left[x(t)\frac{2s}{1+s^2}+y(t)\frac{1-s^2}{1+s^2},-x(t)\frac{1-s^2}{1+s^2} +y(t)\frac{2s}{1+s^2} ,z(t)\right].
\end{equation}
If $\f c(t)$ was rational, then so is $\f s(s,t)$. The prominent curve on SOR is the so-called \emph{profile curve}, which is the section of the surface through a plane containing the axis. In the following we will always choose a coordinate system such that the axis of rotation coincides with the $z$-axis and the profile curve is the section through the plane $y=0$.  Therefore, we will usually treat it as a planar curve in the $xz$-plane.

\begin{figure}[tbh]
\begin{center}
\begin{overpic}[height=0.28\columnwidth]{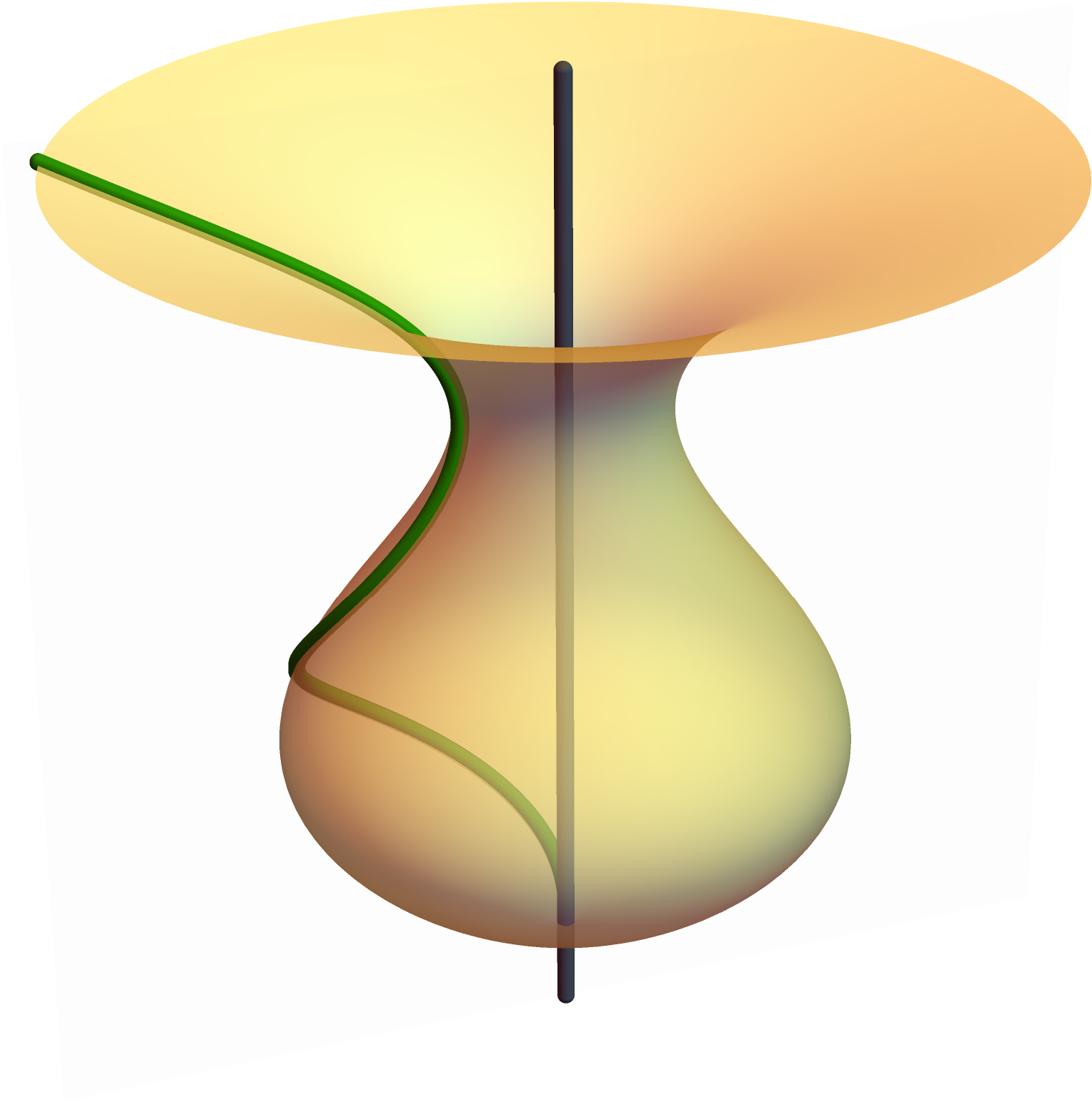}
\put(0,0){$(a)$}
\put(55,85){\fcolorbox{gray}{white}{$z$-axis}}
\put(60,30){\fcolorbox{gray}{white}{$S$}}
\end{overpic}
\hfill
\begin{overpic}[height=0.28\textwidth]{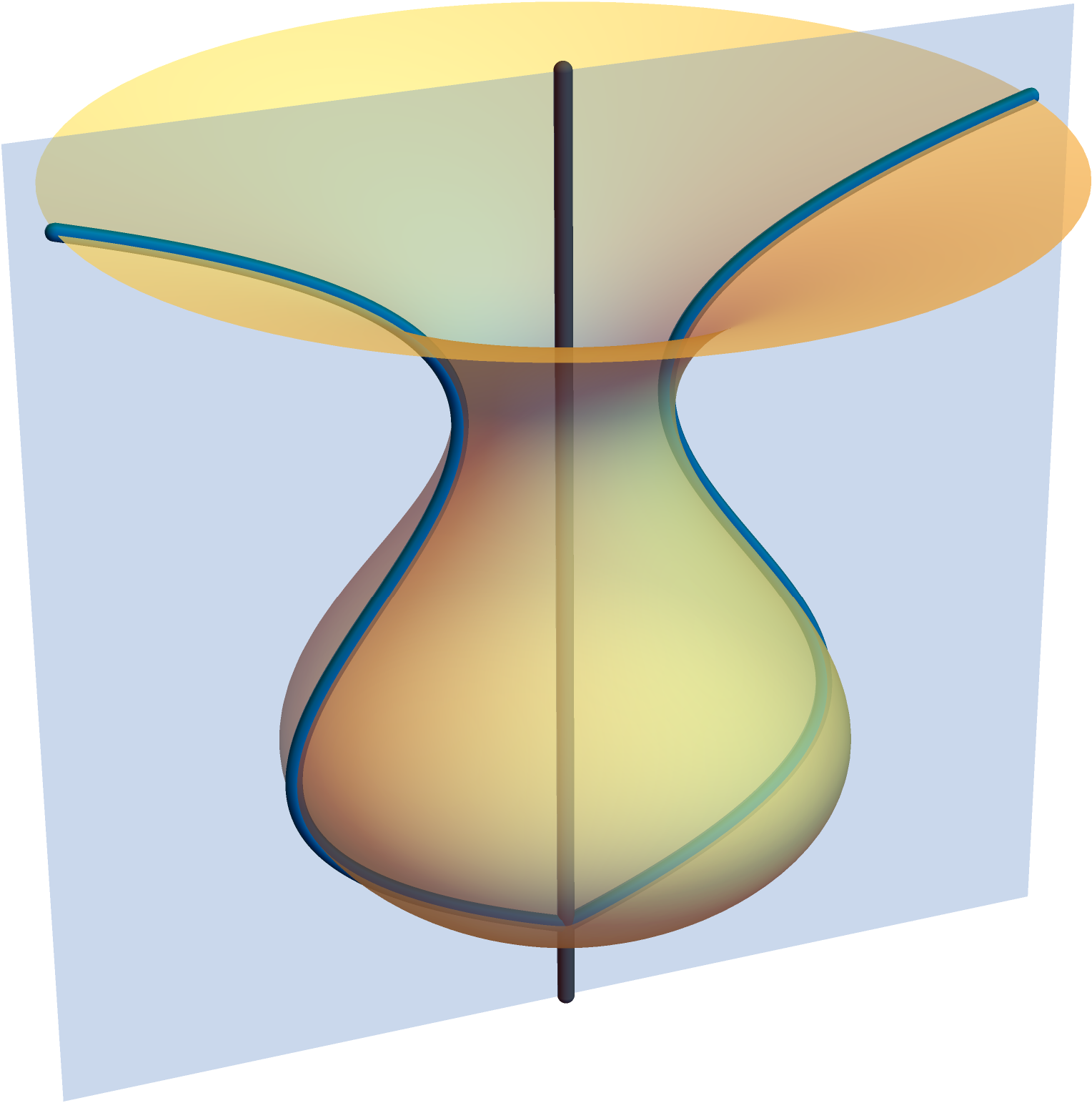}%,grid,tics=20
\put(-5,0){$(b)$}
\put(60,85){\fcolorbox{gray}{white}{$P$}}
\put(6,8){\fcolorbox{gray}{white}{$xz$-plane}}
\end{overpic}
\hfill
\begin{overpic}[height=0.28\textwidth]{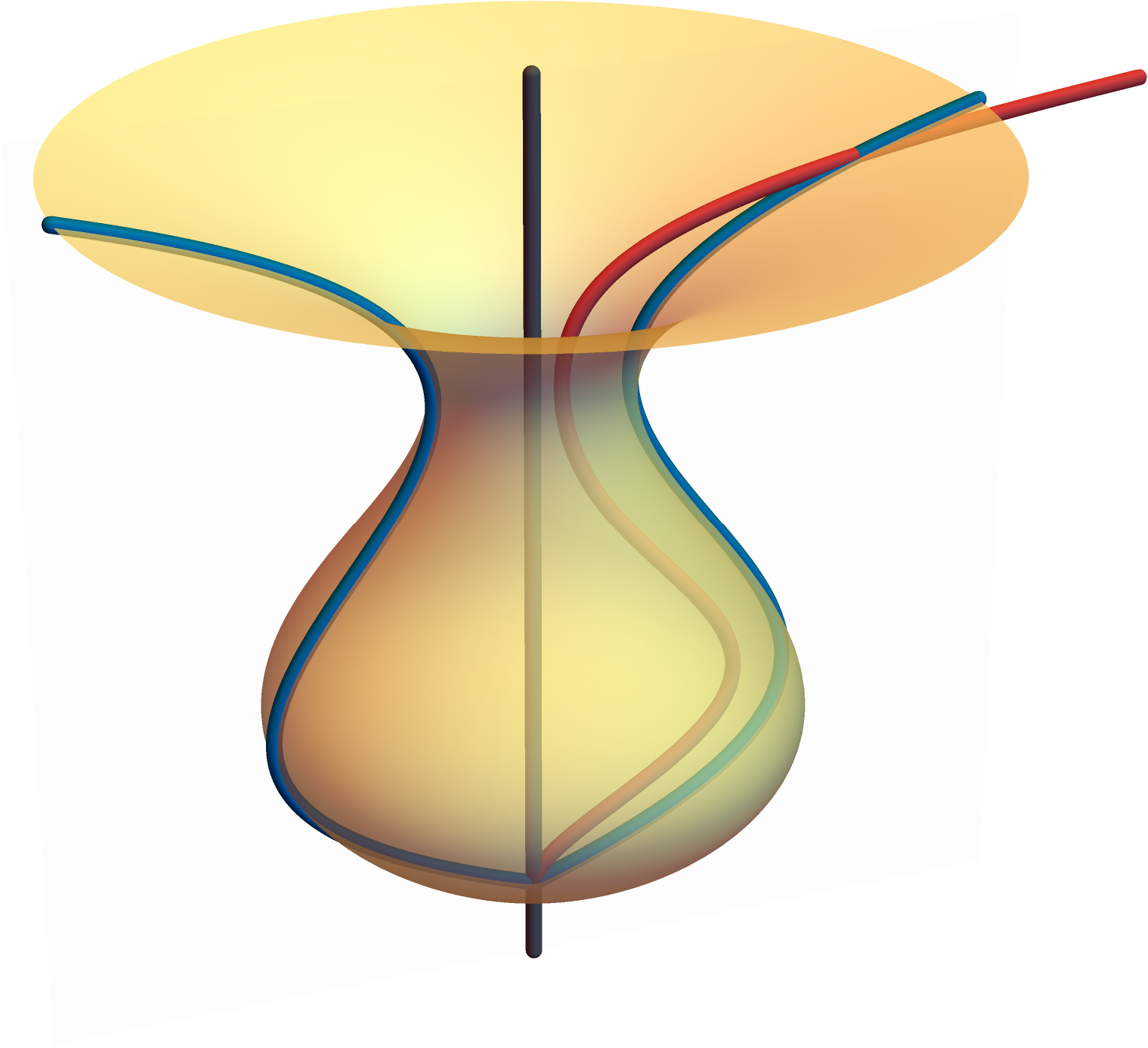}%,grid,tics=20
\put(-5,0){$(c)$}
%\put(92,73){\fcolorbox{gray}{white}{$P^2$}}
\put(93,86){$P^2$}
\end{overpic}
\caption{Surface of revolution.
(a)
Rotating a curve (green) around an $z$-axis yields a surface of revolution $S$.
(b)
Profile curve $P \subset S$ (blue) corresponds to the section of $S$ with the $xz$-plane.
(c)
Mapping $\A^2\rightarrow\A^2$ given by $[x,z]\mapsto[x^2,z]$ yields the curve $P^2 \not\subset S$ (red).
}\label{fig:strips}
\end{center}
\end{figure}

The rationality of the profile curve clearly implies the unirationality of the corresponding SOR. However, the reverse is not true and we will recall the result from \cite{VrLa2015}.   The profile curve of the surface $S$ is denoted by $P$. Then consider the mapping $\A^2\rightarrow\A^2$ given by $[x,z]\mapsto[x^2,z]$. Let $P^2$ be the image of the profile $P$ under this map. Note that since $P$ is symmetric about the $z$-axis, the points $[x,z]$ and $[-x,z]$ on $P$ are mapped to the same point on $P^2$, and thus the map $P\rightarrow P^2$ is a double cover. We get the following statement: 

\begin{proposition}\label{prp:rational SOR}
$S$ is a rational SOR (over $\C$) if and only if $P^2$ is a rational curve.
\end{proposition}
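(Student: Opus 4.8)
The plan is to establish both implications by analyzing the double cover $\pi\colon P\rightarrow P^2$ induced by $[x,z]\mapsto[x^2,z]$ and relating rational parametrizations of $P^2$ to rational parametrizations of the SOR $S$ via the standard formula (\ref{eq:rat param sor}). I would set up notation so that a point of $S$ is recovered from a point $[u,z]$ of $P^2$ by choosing a square root $x$ with $x^2=u$ and then rotating; the substitution $u=x^2$ is exactly the quadratic transformation defining $P^2$.

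For the direction ``$P^2$ rational $\Rightarrow$ $S$ rational (over $\C$),'' I would start from a rational parametrization $[u(t),z(t)]$ of $P^2$. Over $\C$ one can always extract a square root of a rational function \emph{after} passing to a suitable rational reparametrization of the parameter $t$: the key point is that $u(t)$, viewed as an element of the function field $\C(t)$, becomes a square in a rational function field precisely when one reparametrizes so that the branch points are handled, and since $P\rightarrow P^2$ is a double cover, the curve $P$ is itself rational (this is essentially L\"uroth's theorem applied to the rational curve $P^2$ together with the degree-two map). Concretely I would argue that $P$, being a double cover of the rational curve $P^2$, is an irreducible curve whose rationality follows from the structure of the cover, and then lift a rational parametrization $x(\tau),z(\tau)$ of $P$. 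Substituting $x(\tau)$ and $z(\tau)$ into (\ref{eq:rat param sor}) yields a rational parametrization of $S$ in the parameters $(s,\tau)$, so $S$ is unirational; since over $\C$ unirational surfaces are rational (as recalled in the excerpt, cf.\ \cite{Sch13}), $S$ is rational.

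For the converse, ``$S$ rational $\Rightarrow$ $P^2$ rational,'' I would exploit the invariant structure of the SOR. The surface $S$ carries the one-parameter group of rotations about the $z$-axis, and the quotient of $S$ by this group action is birational to the curve $P^2$: the rotation-invariant rational functions on $S$ are generated by $z$ and by $x^2+y^2$ (the squared distance to the axis), and restricting to the profile plane $y=0$ sends $x^2+y^2$ to $x^2$, which is precisely the coordinate $u$ of $P^2$. Thus $P^2$ is the image of $S$ under a dominant rational map (the quotient map), and a dominant rational image of a rational surface onto a curve forces that curve to be rational, again by L\"uroth. I would make this precise by exhibiting the rational map $S\dashrightarrow P^2$, $[x,y,z]\mapsto[x^2+y^2,z]$, checking it is dominant with one-dimensional image equal to $P^2$.

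The main obstacle I expect is the square-root extraction in the first direction: producing an \emph{honest} rational (and ultimately polynomial, in later results) square root of $u(t)$ is not automatic for a fixed parametrization and genuinely requires either reparametrizing or invoking the rationality of the double cover $P$. I would handle this by treating $P$ as the normalization of the fiber product / the curve $\{(x,z) : (x^2,z)\in P^2\}$ and arguing its rationality directly from L\"uroth, rather than trying to take a square root termwise. The rest of the argument is the routine verification that (\ref{eq:rat param sor}) is genuinely a parametrization of $S$ and that the invariant map is dominant onto $P^2$.
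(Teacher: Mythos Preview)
The paper does not prove this proposition; it is recalled from \cite{VrLa2015}. So there is no in-paper argument to compare against, and your proposal must stand on its own.

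Your converse direction is fine: the rotation-invariant map $[x,y,z]\mapsto[x^2+y^2,z]$ is a dominant rational map $S\dashrightarrow P^2$, and a curve dominated by a rational variety is rational by L\"uroth.

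The forward direction, however, contains a genuine error. You claim that the double cover $P\rightarrow P^2$ forces $P$ to be rational once $P^2$ is, ``essentially L\"uroth's theorem applied to the rational curve $P^2$ together with the degree-two map.'' This is L\"uroth applied in the wrong direction. L\"uroth says that a curve \emph{dominated by} a rational curve is rational; it says nothing about a curve that \emph{covers} a rational curve. Double covers of $\mathbb{P}^1$ are precisely the hyperelliptic curves and can have arbitrary genus. Concretely, take $P^2$ to be the rational curve $u=z^3-z$ (a graph, hence rational). Then $P$ is $x^2=z^3-z$, an elliptic curve of genus~$1$, so $P$ is \emph{not} rational. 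Yet the corresponding SOR $S:x^2+y^2=z^3-z$ is rational over~$\C$, as the proposition asserts. Your route through a rational parametrization of $P$ therefore cannot work in general; this is exactly why the paper emphasizes that rationality of the profile curve is \emph{not} equivalent to rationality of $S$.

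The correct mechanism over $\C$ bypasses $P$ entirely: factor $x^2+y^2=(x+\I y)(x-\I y)$ and use a rational parametrization $[u(t),z(t)]$ of $P^2$ to set $x+\I y=s$, $x-\I y=u(t)/s$, $z=z(t)$, which gives a rational parametrization of $S$ in $(s,t)$. (This is the same idea behind the tubularization used later in the paper.) You should replace the $P$-lifting step with this direct construction.
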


Since a circle is not a polynomial curve, the parametrization \eqref{eq:rat param sor} of the SOR respecting its rotational property cannot be polynomial even for a~polynomial profile curve.  On the other hand, there are polynomial SORs.  One of the simplest examples is a paraboloid of revolution $x^2+y^2-z=0$ with the parametrization $[u,v,u^2+v^2]$.   Another example is the sphere from Proposition~\ref{prp:S2 is poly}. Thus there is a promising chance that there is a whole non-trivial set of polynomial surfaces of revolution.  Their characterization is the main goal of the next section.

\section{Polynomial Surfaces of Revolution}

In this section we will characterize the polynomial surfaces of revolution.  We will work over the field of complex numbers $\C$, which will allow us to give a complete characterization.   Since we are aware of the importance of results over reals, these will be discussed in the next section. A first observation is that the first implication of Proposition~\ref{prp:rational SOR} can be immediately specialized to polynomial surfaces.

\begin{proposition}\label{thm: S polynomial then P2 polynomial}
If $S$ is a polynomial SOR (PSOR in short), then $P^2$ is a polynomial curve.
\end{proposition}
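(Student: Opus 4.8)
The plan is to push a polynomial parametrization of $S$ forward through the quadratic projection that defines $P^2$, now realized at the level of the ambient spaces. Concretely, I would introduce the polynomial map $\pi\colon\A^3\to\A^2$, $\pi(x,y,z)=[x^2+y^2,z]$. The first step is to check that $\pi$ carries $S$ onto $P^2$: since $S$ is the rotation of $P$ about the $z$-axis, a point $[a,b,c]$ lies on $S$ if and only if there is $[x,z]\in P$ with $x^2=a^2+b^2$ and $z=c$, whence $\pi(a,b,c)=[x^2,z]\in P^2$; conversely every point of $P^2$ is hit (take $b=0$, $a=x$). Hence $\pi|_S\colon S\to P^2$ is a surjective, in particular dominant, morphism.

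Next, assuming $S$ is a PSOR, I would take a polynomial parametrization $\f s(u,v)=[s_1,s_2,s_3]\colon\A^2\to S$ and compose it with $\pi$ to obtain
$$
g(u,v) := (\pi\circ\f s)(u,v) = \left[\,s_1(u,v)^2+s_2(u,v)^2,\ s_3(u,v)\,\right].
$$
This is again given by polynomials, and as a composition of dominant morphisms it is a dominant map $\A^2\to P^2$. So $P^2$ already carries a polynomial parametrization, albeit defined on $\A^2$ rather than on $\A^1$.

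The remaining step, and the only one that requires an argument, is to descend from two parameters to one, as befits a curve. Since $g$ is dominant onto the positive-dimensional $P^2$ it is non-constant, so I can pick $p,q\in\A^2$ with $g(p)\neq g(q)$ and restrict $g$ to the line $L$ through $p$ and $q$; identifying $L\cong\A^1$ by an affine reparametrization yields a non-constant polynomial morphism $\A^1\to P^2$. Because $P^2$ is an irreducible curve, the image of any non-constant morphism out of $\A^1$ is dense, so this restriction is a dominant polynomial parametrization, and therefore $P^2$ is a polynomial curve.

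I expect no genuine obstacle here: the statement is precisely the polynomial specialization of the ``surface $\Rightarrow P^2$'' half of Proposition~\ref{prp:rational SOR}, and the whole point is simply that both $\pi$ and the composition $\pi\circ\f s$ preserve polynomiality. The only place that demands mild care is this last reduction of the two-parameter map $g$ to a genuine one-parameter parametrization of the curve $P^2$; everything else is a direct computation.
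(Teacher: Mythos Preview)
Your proof is correct and follows the same idea as the paper's: push a polynomial parametrization of $S$ through the map $[x,y,z]\mapsto[x^2+y^2,z]$ to land in $P^2$. The paper is slightly more direct in the reduction to one parameter: instead of restricting $g$ to a generic line and invoking irreducibility of $P^2$, it takes any polynomial curve $[c_1,c_2,c_3]$ on $S$, observes that $c_3$ cannot be constant (otherwise the curve would be a polynomially parametrized circle), and concludes that $[c_1^2+c_2^2,\,c_3]$ is already a one-variable polynomial parametrization of $P^2$.
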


\begin{proof}
If $S$ is a PSOR, then there exists a polynomial curve $C\subset S$. Let $\f c(t)=[c_1(t),c_2(t),c_3(t)]$ be its parametrization. Obviously $c_3(t)$ is not constant, because a circle does not have a polynomial parametrization. This means that $[c_1^2(t)+c_2^2(t),c_3(t)]$ is a polynomial parametrization of the curve $P^2$ associated to~$S$.
\end{proof}

Now let us focus on the opposite implication of Proposition~\ref{thm: S polynomial then P2 polynomial}. Let a proper polynomial parametrization of $P^2$ be written in the form 
\begin{equation}\label{eq param of P2}
\left[p(t)a^2(t),b(t)\right],
\end{equation}
where $a,b,p\in \C[t]$ and $p$ is square-free. By Proposition~\ref{prp:polynomial curves} the parametrization is unique up to a linear reparametrization $\varphi(t)=\alpha t+ \beta$. In particular, the degree of $p$ is independent of the chosen parametrization and we will denote it by $\Delta_S$. In addition to any SOR $S$ with a polynomial $P^2$, we associate the so-called \emph{tubularization}, defined by the equation
\begin{equation}\label{eq:tubular}
  T_S:  x^2+y^2-p(z) = 0.
\end{equation}
Of course, this is only unique up to a linear change of the parameter in $p(z)$, which corresponds to an affine transformation of the form $[x,y,z]\mapsto[x,y,\alpha z+\beta]$. Note that in general a~\emph{tubular surface} is any surface of the form $A(z)x^2+B(z)y^2+C(z)$=0, see \cite{Sc00}. In a more general frame, a tubular surface is a pencil of conics and, as a consequence of Tsen's Theorem, it is a rational surface (see \cite[Corollaries 1.11 and 1.12]{Sch13}). The tubularization of SOR was first introduced in \cite{VrLa2015}, where it was used to study the rationality of surfaces of revolution.  It turns out that it can also be used effectively for polynomial SOR because of the following lemma.

\begin{lemma}\label{lem:tubularization}
Let $S$ be a SOR with $P^2$ parametrized by \eqref{eq param of P2} and $T_S$ be its tubularization.  Then there exists a polynomial map $\tau:T_S\rightarrow S$ given by 
\begin{equation}\label{eq:tubular map}
\tau:[x,y,z]\mapsto[ a(z)x,a(z)y,b(z)].
\end{equation}   
In particular, if $T_S$ is polynomial, then so is $S$. 
\end{lemma}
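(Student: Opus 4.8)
The plan is to reduce everything to the implicit description of $S$ in terms of $P^2$, then to verify the claimed map by a one-line substitution, and finally to argue dominance so that the "in particular" follows by composition. First I would record the implicit relation between $S$ and $P^2$. Since the profile curve $P$ lies in the $xz$-plane and is invariant under $x\mapsto -x$ (it is the section of an SOR by a plane through the axis), its defining polynomial is a polynomial in $x^2$ and $z$, say $g(x^2,z)$. Consequently $P^2$, living in the $[X,z]$-plane, is cut out by $g(X,z)=0$, whereas the surface $S$, obtained by replacing the squared distance to the axis $x^2$ by $x^2+y^2$, is cut out by $g(x^2+y^2,z)=0$. Equivalently, a point $[X,Y,Z]$ lies on $S$ if and only if $[X^2+Y^2,Z]$ lies on $P^2$. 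This is the only geometric input needed; the rest is computation.

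Next I would verify that $\tau$ maps $T_S$ into $S$. Take a point $[x,y,z]$ on $T_S$, so that $x^2+y^2=p(z)$ by \eqref{eq:tubular}. The image $\tau([x,y,z])=[a(z)x,a(z)y,b(z)]$ then satisfies
\[
(a(z)x)^2+(a(z)y)^2=a^2(z)\bigl(x^2+y^2\bigr)=a^2(z)p(z).
\]
By the criterion from the previous step, $\tau([x,y,z])\in S$ amounts to $[a^2(z)p(z),b(z)]\in P^2$; but this is exactly the point produced by the parametrization \eqref{eq param of P2} of $P^2$ upon setting $t=z$, hence it lies on $P^2$. Therefore $\tau$ is a well-defined polynomial morphism $T_S\to S$.

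It remains to show that $\tau$ is dominant, which yields the final assertion. Given a generic point $[X,Y,Z]\in S$, the first paragraph gives $[X^2+Y^2,Z]\in P^2$, so by the properness of \eqref{eq param of P2} there is a generically unique parameter value $t$ with $p(t)a^2(t)=X^2+Y^2$ and $b(t)=Z$. Setting $z=t$, $x=X/a(z)$ and $y=Y/a(z)$ produces a point with $x^2+y^2=(X^2+Y^2)/a^2(z)=p(z)$, that is, a point of $T_S$ mapping to $[X,Y,Z]$ under $\tau$. Hence the image of $\tau$ is dense in $S$, i.e. $\tau$ is dominant. Finally, if $T_S$ is polynomial, choose a polynomial dominant parametrization $\f q\colon\A^2\to T_S$; then $\tau\circ\f q\colon\A^2\to S$ is polynomial and dominant, so $S$ is polynomial.

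The step I expect to be the main obstacle is the dominance argument rather than the membership computation. One must ensure that the recovery of $[x,y,z]$ is valid on a dense set: this needs $a(z)$ to not vanish identically, which holds because $a$ is a nonzero polynomial and so $a(t)\neq 0$ for generic $t$, together with the fact that the proper parametrization of $P^2$ genuinely reaches the generic point $[X^2+Y^2,Z]$. A secondary point worth pinning down is the implicit characterization $[X,Y,Z]\in S \Leftrightarrow [X^2+Y^2,Z]\in P^2$, namely that $S$ is precisely the zero set of $g(x^2+y^2,z)$ and not merely contained in it; this follows from $P^2$ being the full image of the symmetric profile $P$ under $[x,z]\mapsto[x^2,z]$.
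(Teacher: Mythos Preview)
Your proof is correct and follows the same skeleton as the paper's: verify that $\tau$ lands in $S$ via the identity $(a(z)x)^2+(a(z)y)^2=a^2(z)p(z)$, then establish dominance. The only genuine difference is in the dominance step. The paper computes the Jacobian determinant of $\tau$ as a map $\A^3\to\A^3$, obtains $a^2(z)b'(z)$, and concludes dominance from its generic nonvanishing (so the argument works whenever $b$ is non-constant, i.e.\ $S$ is not a plane orthogonal to the axis). You instead construct an explicit rational preimage of a generic point of $S$ by inverting the proper parametrization \eqref{eq param of P2} and dividing by $a(z)$. Your route is a bit more self-contained and makes the use of properness explicit; the paper's Jacobian computation is shorter and cleanly isolates the degenerate case $b'\equiv 0$.
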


\begin{proof}
Consider a point $\f x_0=[x_0,y_0,z_0]\in T_S$.  Its image $\tau(\f x_0)$ satisfies 
$$(a(z_0)x_0)^2+(a(z_0)y_0)^2=a^2(z_0)(x_0^2+y_0^2)=a^2(z_0)p(z_0),$$
i.e., it is the rotated point $[a(z_0)\sqrt{p(z_0)},b(z_0)]$, which is actually a point on the profile curve and thus $\tau(\f x_0)\in S$. The determinant of the Jacobi matrix of the map $\tau$ is equal to $a^2(z)b'(z)$, which is generically nonzero whenever $b(z)$ is a non-constant polynomial.  This means that the map $\tau:T_S\rightarrow S$ defined by \eqref{eq:tubular map} is a dominant polynomial map whenever $S$ is not a plane perpendicular to the axis.
\end{proof}

\begin{remark}\rm
Note that the parametrization \eqref{eq param of P2} of $P^2$ also provides a radical parametrization  (see \cite{SENDRA20171} for further details)
\begin{equation}\label{eq:parm of S}
\f p(s,t) = \left[\frac{2s}{1+s^2}a(t)\sqrt{p(t)},\frac{1-s^2}{1+s^2}a(t)\sqrt{p(t)},b(t)\right].
\end{equation}
of $S$. We will use it later. Furthermore, 
\[  \left[\frac{2s}{1+s^2} \sqrt{p(t)},\frac{1-s^2}{1+s^2} \sqrt{p(t)},t\right] 
\]
is a radical parametrization of $T_S$.
\end{remark}

\begin{remark}\label{rem:st0}\rm
The converse of Lemma~\ref{lem:tubularization} is not true. For example, the cylinder of revolution $x^2+y^2-1=0$ is not a~polynomial surface.   
In fact, every polynomial surface contains a~rich family of polynomial curves -- the images of polynomial curves in the parameter domain $\A^2$ under the parametrization.  However, the only polynomial curves on the cylinder are its rulings. If $[x(t),y(t),z(t)]$ would be a polynomial curve on the cylinder, which is not a ruling, then its projection $[x(t),y(t)]$ on the first two coordinates would be a~polynomial parametrization of the circle. This is a contradiction. 

On the other hand, any SOR $S$ whose tubularization is the cylinder of revolution is polynomial.  To see this, consider a parametrization \eqref{eq param of P2} of $P^2$.  Now $p(t)$ is constant, which means that the profile curve $P$ consists of two components parametrized by $[\pm a(t),b(t)]$; we assume w.l.o.g. that $p(t)=1$. Furthermore, the surface $S$ is a cylinder if and only if $a(t)$ is constant. Assume (possibly after some suitable reparametrization) that 0 is a root of $a(t)$ and consider the substitution 
$$
[s,t] \mapsto \left[-\frac{u}{v}, u^2 + v^2\right].
$$
This transforms \eqref{eq:parm of S} into
$$
\left[-\frac{2 u v  \, a\left(u^2+v^2\right)}{u^2+v^2},\frac{\left(v^2-u^2\right) a\left(u^2+v^2\right)}{u^2+v^2},b\left(u^2+v^2\right)\right].
$$
Since we assumed $a(t)=t\tilde{a}(t)$, we get a polynomial parametrization of the surface $S$
\begin{equation}\label{eq:param case 0}
\left[-2uv\tilde{a}\left(u^2+v^2\right),\left(v^2-u^2\right) \tilde{a}\left(u^2+v^2\right),b\left(u^2+v^2\right)\right].
\end{equation}
\end{remark}

Let us further consider situations with non-constant $p(t)$, i.e., we now assume that $\Delta_S>0$. 
First, note that the tubular surface $T_S:x^2+y^2-p(z)=0$ can be parametrized rationally as
$$
\f q(s,t)=\left[\frac{\mathrm{i}}{2s}(s^2-p(t)),\frac{1}{2s}(s^2+p(t)) ,t \right].
$$
Clearly, $\f q(s,t)$ fulfills the defining equation of $T_S$ and the Jacobian of the map is of rank 2 generically, i.e., it is a parametrization of the surface. Using the fact that $p$ is not constant, let $\alpha$ be a  root of $p$ and consider a~birational map

$$
\begin{array}{ccccc}
\Phi: & \mathbb{C}^2 &  \rightarrow & \mathbb{C}^2 & \\
      & [s,t] & \mapsto &  [v, u v+\alpha]. 
\end{array} 
$$

Since $p(u\cdot 0+\alpha)=p(\alpha)=0$  we can factor out the variable $v$ from this expression, i.e., $p(u v+\alpha)=v h(u,v)$ for some polynomial $h$. Hence we arrived at
\begin{equation}\label{eq:param of tubular}
\f q(\Phi(u,v)) = \left[ \frac{\mathrm{i}}{2} (v-h(u,v)),\frac{\mathrm{1}}{2} (v+h(u,v)), u v + \alpha  \right],
\end{equation}
which is a polynomial parametrization of $T_S$. Therefore, the original SOR $S$ is polynomial by Lemma~\ref{lem:tubularization} and we just proved the following theorem.

\begin{theorem}\label{thm: S polynomial iff P2 polynomial}
$S$ is PSOR (over $\C$) if and only if it is not a cylinder of revolution and $P^2$ is a polynomial curve.
\end{theorem}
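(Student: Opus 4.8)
The plan is to establish the equivalence by assembling the results already proved, organizing the argument around the invariant $\Delta_S=\deg p$ coming from a proper parametrization \eqref{eq param of P2} of $P^2$. The forward implication is essentially immediate: if $S$ is a PSOR, then Proposition~\ref{thm: S polynomial then P2 polynomial} gives at once that $P^2$ is a polynomial curve, and the first part of Remark~\ref{rem:st0} shows that a cylinder of revolution (which, up to the allowed affine normalization, is $x^2+y^2-1=0$) carries no polynomial curve except its rulings and hence admits no polynomial parametrization. Therefore a polynomial $S$ cannot be a cylinder, and this direction requires no further computation.

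For the converse I would assume that $S$ is not a cylinder of revolution and that $P^2$ is polynomial, written as \eqref{eq param of P2} with $p$ square-free, and then split according to whether $\Delta_S$ vanishes. If $\Delta_S=0$, then $p$ is constant, so after rescaling $p\equiv 1$ and $T_S$ is the cylinder; this is exactly the situation treated in the second part of Remark~\ref{rem:st0}. Here the non-cylinder hypothesis is precisely what is needed: by the criterion that $S$ is a cylinder if and only if $a$ is constant, $a$ must be non-constant and hence has a root, which a linear reparametrization moves to the origin, whereupon the substitution $[s,t]\mapsto[-u/v,\,u^2+v^2]$ turns the radical parametrization \eqref{eq:parm of S} into the polynomial parametrization \eqref{eq:param case 0} of $S$.

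If instead $\Delta_S>0$, then $p$ is non-constant and has a root $\alpha$, and I would simply record the construction carried out immediately before the statement: composing the rational parametrization $\f q(s,t)$ of $T_S$ with the birational map $\Phi$ and using $p(\alpha)=0$ to factor $v$ out of $p(uv+\alpha)$ produces the polynomial parametrization \eqref{eq:param of tubular} of $T_S$. Lemma~\ref{lem:tubularization} then transfers polynomiality from $T_S$ to $S$, its dominance hypothesis holding because a genuine SOR has non-constant $z$-coordinate, i.e. $b$ is non-constant. As the two cases are exhaustive, the converse follows.

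Every individual verification is already supplied by the cited Remark, Lemma and Proposition, so I anticipate no genuine analytic difficulty. The one point deserving care — and the closest thing to an obstacle — is the logical book-keeping of the case split on $\Delta_S$: one must confirm that $\Delta_S=0$ and $\Delta_S>0$ are exhaustive and, crucially, that the non-cylinder hypothesis is invoked exactly where it is required, namely in the degenerate case $\Delta_S=0$ to secure a root of $a$, whereas for $\Delta_S>0$ the polynomiality of $S$ holds unconditionally once $P^2$ is polynomial.
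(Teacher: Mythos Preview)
Your proposal is correct and follows essentially the same route as the paper: the forward direction via Proposition~\ref{thm: S polynomial then P2 polynomial} together with the observation in Remark~\ref{rem:st0} that the cylinder of revolution is not polynomial, and the backward direction by the case split on $\Delta_S$, invoking Remark~\ref{rem:st0} for $\Delta_S=0$ and the construction of \eqref{eq:param of tubular} combined with Lemma~\ref{lem:tubularization} for $\Delta_S>0$. Your explicit remark that the non-cylinder hypothesis is used precisely in the $\Delta_S=0$ branch to guarantee a root of $a$ is a welcome clarification of the logical structure that the paper leaves implicit.
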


\begin{remark}\label{eq:final}
Combining \eqref{eq:param of tubular} with Lemma~\ref{lem:tubularization}, the parametrization of the original SOR $S$ with $P^2$ parametrized by \eqref{eq param of P2} can be given explicitly as
$$
\f s(u,v) = \left[ \frac{\mathrm{i}}{2} a(u v + \alpha) (v-h(u,v)),\frac{\mathrm{1}}{2} a(u v + \alpha) (v+h(u,v)), b(u v + \alpha)  \right].
$$
\end{remark}

\section{Polynomial Surfaces of Revolution with Real Parameterizations}\label{sec-real-param}

Real parametrizations play a crucial role in applications. Therefore, in this section we will focus on real SORs. Of course, the curve $P^2$ is considered to be polynomial and its parametrization \eqref{eq param of P2} can again be written using real polynomials $a,b,p$. We will give a partial answer to the question of the existence of a real parametrization of the surface depending on the degree $\Delta_S$.

\subsection*{$\boldsymbol{\Delta_S=0}$\bf\,:}
This case is solved in Remark~\ref{rem:st0}. The (complex) parametrization exists if $a(t)$ is not constant. The resulting formula \eqref{eq:param case 0} assumes that $0$ is the root of $a(t)$. If $a(t)$ has a real root, then this can always be achieved by a real reparametrization and the final parametrization of the surface is real as well. 
%\sout{However, if $a$ does not have a real root, then we do not know whether there exists a real parametrization or not.}
%
Next, let us consider $a(t)$ without real roots. Then $P$ is (possibly after a suitable reparametrization) of the form
$$
\left[(t^2+1)\hat{a}(t),b(t)\right],
$$
where $\hat{a}(t)$ has only imaginary roots or it is a constant. The corresponding parametrization \eqref{eq:parm of S} is
$$
\f p(s,t) = \left[\frac{2s}{1+s^2}(t^2+1)\hat{a}(t),\frac{1-s^2}{1+s^2}(t^2+1)\hat{a}(t),b(t)\right].
$$
We perform a transformation  of the form $s=A/B$, $t=C$  where $A,B,C$ are polynomials in $u,v$ and we get
\begin{equation}\label{eq:Delta0_norealroot}
\left[\frac{2AB}{A^2+B^2}(C^2+1)\hat{a}(C),\frac{B^2-A^2}{A^2+B^2}(C^2+1)\hat{a}(C),b(C)\right].
\end{equation}
For example, if we choose the polynomials $A,B,C$ such that $C^2+1=A^2+B^2$, then \eqref{eq:Delta0_norealroot} is polynomial. In the part dedicated to $\Delta_S=2$ we will show that such polynomials exist and thus also for $a(t)$ without real roots we can find a real polynomial parametrization of SOR.

\subsection*{$\boldsymbol{\Delta_S=1}$\bf\,:}
In this case $p$ is a linear polynomial, so the parametrization of $P^2$ can be written in the form $[t a^2(t),b(t)]$, i.e., $p(t)=t$ and $a(t),b(t)\in\R[t]$. The corresponding tubular surface is in fact a paraboloid of revolution with real polynomial parametrization $[u,v,u^2+v^2]$, which gives a real polynomial parametrization of the surface $S$:
$$
\f s(u,v) = \left[ a(u^2+v^2)u,a(u^2+v^2)v,b(u^2+v^2)\right].
$$

\subsection*{$\boldsymbol{\Delta_S=2}$\bf\,:}
Now the tubular surface \eqref{eq:tubular} is a quadric of revolution. Applying a real linear reparametrization, we can choose the representative with $p(z)=\pm z^2+\lambda$, where $\lambda\in \R$ is nonzero. Depending on the sign of $\lambda$, the situation splits into four subcases that must be treated separately:

%----------EMPTY SET-----------------------------------------------------
\paragraph{$\boldsymbol{p(z)=-z^2-|\lambda|}$}  The surface $S$ has no real point and therefore does not have a real parameterization.

%--------SPHERE----------------------------------------------------------
\paragraph{$\boldsymbol{p(z)=-z^2+|\lambda|}$}  In this case $T_S$ is a sphere. As already mentioned bellow Proposition~\ref{prp:S2 is poly} it does not posses any real parameterization as well.

%------HYPERBOLOID OF 1 SHEET----------------------------------------------

\paragraph{$\boldsymbol{p(z)=z^2+|\lambda|}$} The surface $T_S$ is a one-sheeted hyperboloid.  We can use the parametrization from Proposition~\ref{prp:S2 is poly} to give its real proper polynomial parametrization in the form
\begin{equation}\label{eq:param_1_sheeted_hyp}
   \sqrt{|\lambda|} [v-u (u v+1),2 u v+1,u (u v+1)+v].
\end{equation}
Combining this with formula \eqref{eq:tubular map} from Lemma~\ref{lem:tubularization}, we get a real proper polynomial parametrization of any SOR $S$, whose associated curve $P^2$ can be parametrized as $[(t^2+|\lambda|)a^2(t),b(t)]$, in the following form:
$$
\left[\sqrt{|\lambda|} a(\sqrt{|\lambda|}(u (u v+1)+v))(v-u (u v+1)),\sqrt{|\lambda|} a(\sqrt{|\lambda|}(u (u v+1)+v))(2 u v+1),b(\sqrt{|\lambda|} ((u v+1)+v)) \right].
$$
Moreover, going back to the situation $\Delta_S=0$, it is parametrization \eqref{eq:param_1_sheeted_hyp}  of the one-sheeted hyperboloid $A^2+B^2-C^2=1$ that guarantees the polynomiality of the real parametrization \eqref{eq:Delta0_norealroot}. Hence, it is enough to take  $A=v-u (u v+1)$, $B=2 u v+1$, $C=u (u v+1)+v$ and substitute it to \eqref{eq:Delta0_norealroot}.
%
%
%gives the choice $A=v-u (u v+1)$, $B=2 u v+1$, $C=u (u v+1)+v$ guaranteeing the polynomiality of \eqref{eq:Delta0_norealroot} and hence yielding a sought real polynomial parametrization when $a(t)$ has no real roots.
%$$
%\left[2(-u + v - 3 u^2 v + 2 u v^2 - 2 u^3 v^2)\hat{a}(u^2v+u+v),(1 - u^2 + 6 u v - 2 u^3 v - v^2 + 6 u^2 v^2 - u^4 v^2)\hat{a}(C),b(u^2v+u+v)\right].
%$$

%-----HYPERBOLOID OF 2 SHEETS--------------------------------------------------
\paragraph{$\boldsymbol{p(z)=z^2-|\lambda|}$} Finally, $T_S$ is a two-sheeted hyperboloid.  If $\varphi: \A_\R^2\rightarrow\A^3_\R$ is a polynomial map, then the image must be connected.   This means that $T_S$ cannot have a proper real polynomial parametrization. Nevertheless we will show that there still exists a polynomial parametrization doubly covering one sheet of the surface. From the theory of  Diophantine equation it is known, cf. \cite{rose1995},   how to find integer solutions  of the equation $x^2+y^2-z^2=-1$.  In particular, it holds
\begin{equation}\label{Dioph}
	(q_1 q_2 + q_3 q_4)^2+
	\left(\frac{q_1^2 + q_3^2 - q_2^2 - q_4^2}{2}\right)^2-
	\left(\frac{q_1^2 + q_3^2 + q_2^2 + q_4^2}{2}\right)^2=
	-(q_1 q_4 - q_2 q_3)^2.
\end{equation} 
Hence, any integer $q_1,q_2,q_3,q_4\in\Z$ satisfying the constraint $q_1 q_4 - q_2 q_3=1$ provides an integer point on the two-sheeted hyperboloid. Clearly, we may use this observation to solve our parametrization problem as well. All we have to do is to~find an arbitrary polynomial 2-surface lying on the hypersurface $q_1 q_4 - q_2 q_3=1$. Notice that its section by hyperplane $q_1=q_4$ is in fact  the hyperboloid of one-sheet  $q_1^2-q_2q_3=1$.  Using the result from the previous paragraph  we can parametrize it
$$
\left[\begin{array}{c}
	v - u (1 + u v) \\[0.5ex]
	1 + v + 2 u v + u (1 + u v) \\[0.5ex]
	-1 + v - 2 u v + u (1 + u v)
\end{array}
\right]^{\top}.
$$
Applying this parametrization and formula \eqref{Dioph} yields a real polynomial  parametrization of the two-sheeted hyperboloid in the form
\begin{equation}\label{eq:two sheet param}
\f q(u,v)=
\left[\begin{array}{c}
-2 (u^2 + 2 u^3 v - v^2 + u^4 v^2)\\[0.5ex]
-2 (u + v + 3 u^2 v + 2 u v^2 + 2 u^3 v^2) \\[0.5ex]
1 + 4 u v + 4 u^3 v + 2 v^2 + 2 u^4 v^2 + u^2 (2 + 4 v^2) 
\end{array}
\right]^{\top}.
\end{equation}
After the scaling $\sqrt{|\lambda|}\f q(u,v)$ we arrive at the real polynomial parametrization of the tubular surface $T_S$. The parametrization of the original surface $S$ is then obtained again by composing this with \eqref{eq:tubular map}. It is easy to see that \eqref{eq:two sheet param} is non-proper. In fact it doubly covers the ``upper'' sheet of the hyperboloid.

\subsection*{$\boldsymbol{\Delta_S=3}$\bf\,:}
Although we have not been able to fully characterize surfaces with real parametrizations, we provide a particular example of a surface to support our final conjecture. Namely, we discovered the following real polynomial parametrization of the cubic surface $x^2+y^2-z^3-1=0$  
$$
\left[\begin{array}{c}
 \dfrac{1}{2} u^3 \left(v^3-2 \sqrt{3} v^2+4 v-\sqrt{3}\right)-\dfrac{u^2}{2}+1 \\[0.5ex]
 \dfrac{1}{2} u^3 \left(\sqrt{3} v^3-4 v^2+2 \sqrt{3} v-1\right)+\dfrac{1}{2} u^2 \left(2 \sqrt{3} v^2-4 v+\sqrt{3}\right)+u \\[0.5ex]
 u^2 \left(v^2-\sqrt{3} v+1\right)+u v
\end{array}
 \right]^{\top}.
$$

\subsection*{\bf The conjecture}\label{subsec-conjecture}

Since the surface possessing a proper real polynomial parametrization must be connected and non-compact, and the real components of $T_S$ correspond to the intervals where $p(z)>0$, we immediately see that the necessary condition on the existence of such a parametrization is that $p(z)$ either has no real root and is positive, or it has exactly one real root. (Note that if $p(z)$ is negative everywhere, then the real part of the surface $T_S$ is empty). We have shown for $\Delta_S =1,2$ that the converse is also true, and we have provided a supporting example for $\Delta_S=3$.  At the same time, we believe that this holds not only for a tubular surface $T_S$, but also for the original surface $S$. Therefore we formulate the following conjecture, the verification of which is still an open problem.

\begin{conjecture}
PSOR $S$ has a proper real polynomial parametrization if and only if it is two-dimensional as a real set and the associated polynomial $p(t)$ has at most one real root.
\end{conjecture}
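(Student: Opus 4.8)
The plan is to prove the two implications separately, treating necessity as essentially the topological discussion that precedes the statement and concentrating the real effort on sufficiency. For necessity, I would start from the fact that the image of a non-constant polynomial map $\A^2_\R\to S$ is connected (continuous image of a connected set) and unbounded, so that a surface carrying a proper real polynomial parametrization must be two-dimensional as a real set, connected and non-compact. The delicate point is to convert these topological constraints into the arithmetic condition on $p$. Since the real points of $T_S$ lying over a height $z$ form a circle, a single point, or nothing according to the sign of $p(z)$, the real locus of $T_S$ is connected and non-compact exactly when $\{p>0\}$ is one unbounded interval, i.e. when $p$ has at most one real root (recall $p$ is square-free, so every real root is a genuine sign change). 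To transfer this to $S$ I would use the map $\tau$ of Lemma~\ref{lem:tubularization}, together with the observation that $\tau$ is in fact \emph{birational}: given a point $[X,Y,W]\in S$ one has $[X^2+Y^2,W]=[a^2(z)p(z),b(z)]$, which is the image of $z$ under the parametrization \eqref{eq param of P2} of $P^2$; as that parametrization is proper (Proposition~\ref{prp:polynomial curves}), its rational inverse recovers $z$, and then $x=X/a(z)$, $y=Y/a(z)$.

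For sufficiency, the reduction via Lemma~\ref{lem:tubularization} together with the birationality of $\tau$ shows that it is enough to produce a proper real polynomial parametrization of $T_S$. Here the key algebraic input is the factorization over $\R$ of a square-free $p$ with at most one real root, namely $p(z)=c\,\ell(z)\prod_j\big((z-\mu_j)^2+\nu_j^2\big)$, where $\ell(z)=z-r$ if a real root $r$ is present and $\ell=1$ otherwise, and each quadratic factor is positive definite. Two elementary facts drive the construction: every positive-definite quadratic $(Z-\mu_j)^2+\nu_j^2$ is a sum of two squares of real polynomials for any real polynomial $Z$, and a product of sums of two squares is again a sum of two squares (Brahmagupta--Fibonacci, i.e. the norm on $\C[u,v]$). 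Hence once $c\,\ell(Z)$ is written as $X_0^2+Y_0^2$ for a suitable $Z$, the same holds for $p(Z)=X^2+Y^2$, giving the candidate $[X,Y,Z]$ on $T_S$.

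The two cases are then separated according to whether $p$ has a real root. If it does, I would take $Z=\pm(u^2+v^2)+r$, with the sign chosen so that $Z$ sweeps the ray on which $p>0$; absorbing $c$, this yields $c\,\ell(Z)=(\sqrt{|c|}\,u)^2+(\sqrt{|c|}\,v)^2$, a sum of two squares with genuine two-parameter dependence, and Brahmagupta finishes the job. If $p$ has no real root (so $\ell=1$ and $p>0$ everywhere), the naive choice collapses to a curve, and instead I would feed one quadratic factor $(z-\mu_1)^2+\nu_1^2$ into the one-sheeted hyperboloid parametrization \eqref{eq:param_1_sheeted_hyp}: rescaling the real polynomials $A,B,C$ there gives $X_1=\sqrt c\,\nu_1A$, $Y_1=\sqrt c\,\nu_1B$, $Z=\mu_1+\nu_1C$ with $X_1^2+Y_1^2=c\,\big((Z-\mu_1)^2+\nu_1^2\big)$, a two-dimensional image with $Z$ surjecting onto $\R$, and the remaining factors are incorporated through Brahmagupta evaluated at this $Z$. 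In both cases properness should follow from the circle-fibre structure: over a generic point of $T_S$ the fibre of the $z$-coordinate is a single circle on which the construction reduces to the injective map $(u+\I v)\mapsto g(Z)(u+\I v)$ (respectively the birational hyperboloid chart), so the parametrization is birational; composing with the birational $\tau$ then gives a proper real polynomial parametrization of $S$.

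The step I expect to be the main obstacle is precisely the passage from $T_S$ to $S$ at the level of \emph{real} topology, which is why the statement remains conjectural. Connectedness and non-compactness of the real point set are not birational invariants over $\R$ (cf. the Comessatti--Roy theory recalled in the introduction, \cite{Comessatti1913,Roy1998}), so one must certify that $\tau$ neither merges nor splits real components and that the constructed map genuinely covers a Euclidean disc of real points of $S$; this forces control of the loci where $a(z)=0$ and of the behaviour of $b$. Making this bookkeeping uniform in the degree $\Delta_S$, so that the simple count of real roots of $p$ matches the global real geometry of $S$ in every case rather than only for $T_S$ and the low-degree instances already verified, is the part that resists a clean general argument.
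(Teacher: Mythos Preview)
The statement is a \emph{conjecture} in the paper, explicitly flagged as open: the authors write that its verification ``is still an open problem.'' There is therefore no proof in the paper to compare against. What the paper actually offers is (i) the necessity argument---connected, non-compact image of a real polynomial map forces $p$ to have at most one real root---worked out only for the tubular surface $T_S$, not for $S$ itself; (ii) sufficiency established case by case only for $\Delta_S\le 2$; and (iii) a single supporting example for $\Delta_S=3$.

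Your sufficiency argument genuinely goes beyond the paper. The Brahmagupta--Fibonacci construction---writing $p(Z)$ as $X^2+Y^2$ by seeding with either $Z=\pm(u^2+v^2)+r$ (one real root) or with the one-sheeted hyperboloid chart \eqref{eq:param_1_sheeted_hyp} (no real root), and then absorbing the remaining positive-definite quadratic factors via the two-squares identity---does produce a proper real polynomial parametrization of $T_S$ for every $\Delta_S\ge 1$; the fibre-by-fibre check you sketch (over a fixed $Z$ the map becomes multiplication by a nonzero complex constant on $u+\I v$, respectively the birational hyperboloid chart) is sound. Your observation that $\tau$ is birational, via the rational inverse of the proper parametrization of $P^2$, is correct and is not stated in the paper; it cleanly upgrades the parametrization of $T_S$ to a proper real polynomial parametrization of $S$. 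If this holds up under scrutiny it settles the sufficiency direction of the conjecture in full generality, which the paper does not claim. The one gap is $\Delta_S=0$: there $T_S$ is the non-polynomial cylinder, your scheme has no factor to seed from, and you would have to fall back on the separate argument of Remark~\ref{rem:st0}.

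On necessity you and the paper are in the same position. Both reduce to the real topology of $T_S$, and both lack a mechanism to transfer the component count across $\tau$ to $S$. Your diagnosis of the obstacle---birational maps over $\R$ need not preserve the number or compactness of real connected components, and the loci $a(z)=0$ together with the branching of $b$ are exactly where control is lost---is accurate and is precisely why the paper leaves the statement as a conjecture. Note in particular that the birationality of $\tau$ helps you for sufficiency (composition of birational polynomial maps stays birational) but not for necessity: $\tau^{-1}$ is only rational, so a proper real polynomial parametrization of $S$ does not pull back to one of $T_S$, and the connectedness of $S(\R)$ does not a priori force connectedness of $T_S(\R)$.
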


\section{Addendum -- (real) polynomial parametrizations of  quadrics}\label{sec-quadrics}
We have seen in the previous sections that the sphere, the hyperboloids of revolution, the cone of revolution, and the paraboloid of revolution possess polynomial parametrizations (a non-real one in the case of the sphere).  Clearly, this does not specialize to quadrics of revolution, since the polynomiality is preserved under affine transformations.  The hyperbolic paraboloid is also a polynomial surface, and we can conclude that all irreducible quadrics are polynomial surfaces except the cylinders over an~ellipse and a~hyperbola. This is summarized in the following table.

\begin{table}[H]
\begin{center}
\begin{tabular}{|ccc|}
\hline
\multicolumn{3}{|c|}{Existence of polynomial parametrizations of real irreducible quadrics}                                                                                      \\ \hline
\multicolumn{1}{|c|}{\multirow{4}{*}{\begin{tabular}[c]{@{}c@{}}singular\\ quadrics\end{tabular}}} & \multicolumn{1}{c|}{cone}                      & yes                        \\ \cline{2-3} 
\multicolumn{1}{|c|}{}                                                                             & \multicolumn{1}{c|}{elliptic cylinder}         & no                         \\ \cline{2-3} 
\multicolumn{1}{|c|}{}                                                                             & \multicolumn{1}{c|}{hyperbolic cylinder}       & no                         \\ \cline{2-3} 
\multicolumn{1}{|c|}{}                                                                             & \multicolumn{1}{c|}{parabolic cylinder}        & yes                        \\ \hline
\multicolumn{1}{|c|}{\multirow{5}{*}{\begin{tabular}[c]{@{}c@{}}regular\\ quadrics\end{tabular}}}  & \multicolumn{1}{c|}{ellipsoid}                 & yes (over $\C$ only)            \\ \cline{2-3} 
\multicolumn{1}{|c|}{}                                                                             & \multicolumn{1}{c|}{hyperboloid of one sheet}  & yes                        \\ \cline{2-3} 
\multicolumn{1}{|c|}{}                                                                             & \multicolumn{1}{c|}{hyperboloid of two sheets} & yes (over $\C$ or non-proper over $\R$) \\ \cline{2-3} 
\multicolumn{1}{|c|}{}                                                                             & \multicolumn{1}{c|}{hyperbolic paraboloid}     & yes                        \\ \cline{2-3} 
\multicolumn{1}{|c|}{}                                                                             & \multicolumn{1}{c|}{elliptic paraboloid}       & yes                        \\ \hline
\end{tabular}
\end{center}
\end{table}

\section{Summary}

Building on the previous work \citep{VrLa2015} devoted to rational surfaces of revolution, we studied the polynomiality of these shapes. It was shown that the polynomiality is related to a certain associated plane curve and its properties. We presented a method for computing a polynomial parametrization of the surface over the complex numbers. The use of so-called tubular surfaces, which always represent a class of surfaces of revolution, proved to be a suitable tool. In addition, real polynomial parametrizations for surfaces of revolution have been studied, giving insights into their existence and computation. This investigation led to a conjecture concerning the general case, which was supported by several particular results. Finally, our study led to a comprehensive understanding of the real polynomiality of quadrics, which is also a result that we are not aware of being published anywhere.

%%%%%%%%%%%%%%%%%%%%%%%%%%%%%%%%%%%%%%%%%%%%%%%%%%%%%%%%%%%%%%%%%%%%%%%%%%%%%%%%%%%%%%%%%%%%%%%%%%%%%%%%%%%%%%%%%%%%%%%%%%%%%%%%%%%%%%%%
\section*{Acknowledgments}
%%%%%%%%%%%%%%%%%%%%%%%%%%%%%%%%%%%%%%%%%%%%%%%%%%%%%%%%%%%%%%%%%%%%%%%%%%%%%%%%%%%%%%%%%%%%%%%%%%%%%%%%%%%%%%%%%%%%%%%%%%%%%%%%%%%%%%%%
%


\begin{thebibliography}{12}
\expandafter\ifx\csname natexlab\endcsname\relax\def\natexlab#1{#1}\fi
\providecommand{\url}[1]{\texttt{#1}}
\providecommand{\href}[2]{#2}
\providecommand{\path}[1]{#1}
\providecommand{\DOIprefix}{doi:}
\providecommand{\ArXivprefix}{arXiv:}
\providecommand{\URLprefix}{URL: }
\providecommand{\Pubmedprefix}{pmid:}
\providecommand{\doi}[1]{\href{http://dx.doi.org/#1}{\path{#1}}}
\providecommand{\Pubmed}[1]{\href{pmid:#1}{\path{#1}}}
\providecommand{\bibinfo}[2]{#2}
\ifx\xfnm\relax \def\xfnm[#1]{\unskip,\space#1}\fi
%Type = Book
\bibitem[{Bochnak et~al.(1998)Bochnak, Coste and Roy}]{Roy1998}
\bibinfo{author}{Bochnak, J.}, \bibinfo{author}{Coste, M.},
  \bibinfo{author}{Roy, M.}, \bibinfo{year}{1998}.
\newblock \bibinfo{title}{Real Algebraic Geometry}.
\newblock \bibinfo{publisher}{Springer-Verlag}.
%Type = Article
\bibitem[{Comessatti(1913)}]{Comessatti1913}
\bibinfo{author}{Comessatti, A.}, \bibinfo{year}{1913}.
\newblock \bibinfo{title}{Fondamenti per la geometria sopra le superficie
  razionali dal punto di vista reale}.
\newblock \bibinfo{journal}{Mathematische Annalen} \bibinfo{volume}{73},
  \bibinfo{pages}{1--72}.
%Type = Article
\bibitem[{Falkensteiner et~al.(2025)Falkensteiner, Ovchinnikov and
  Sendra}]{FOS}
\bibinfo{author}{Falkensteiner, S.}, \bibinfo{author}{Ovchinnikov, A.},
  \bibinfo{author}{Sendra, J.R.}, \bibinfo{year}{2025}.
\newblock \bibinfo{title}{Algorithm for globally identifiable
  reparametrizations of odes}.
\newblock \bibinfo{journal}{Journal of Symbolic Computation}
  \bibinfo{volume}{128}, \bibinfo{pages}{102385}.
%Type = Article
\bibitem[{Peretz(1996)}]{PeRo96}
\bibinfo{author}{Peretz, R.}, \bibinfo{year}{1996}.
\newblock \bibinfo{title}{Polynomial parametrization and etale exoticity}.
\newblock \bibinfo{journal}{Illinois Journal of Mathematics}
  \bibinfo{volume}{40}, \bibinfo{pages}{502--517}.
%Type = Article
\bibitem[{P{\'e}rez-D{\'\i}az and Sendra(2020)}]{SePD2020}
\bibinfo{author}{P{\'e}rez-D{\'\i}az, S.}, \bibinfo{author}{Sendra, J.R.},
  \bibinfo{year}{2020}.
\newblock \bibinfo{title}{Computing birational polynomial surface
  parametrizations without base points}.
\newblock \bibinfo{journal}{Mathematics} \bibinfo{volume}{8},
  \bibinfo{pages}{2224}.
%Type = Article
\bibitem[{P{\'e}rez-D{\'\i}az et~al.(2023)P{\'e}rez-D{\'\i}az, de~Sevilla,
  Benedicto and Shen}]{PDSeBeSh2023}
\bibinfo{author}{P{\'e}rez-D{\'\i}az, S.}, \bibinfo{author}{de~Sevilla, M.F.},
  \bibinfo{author}{Benedicto, R.M.}, \bibinfo{author}{Shen, L.Y.},
  \bibinfo{year}{2023}.
\newblock \bibinfo{title}{Detecting and parametrizing polynomial surfaces
  without base points}.
\newblock \bibinfo{journal}{Computer Aided Geometric Design}
  \bibinfo{volume}{107}, \bibinfo{pages}{102255}.
%Type = Book
\bibitem[{Rose(1995)}]{rose1995}
\bibinfo{author}{Rose, H.}, \bibinfo{year}{1995}.
\newblock \bibinfo{title}{A Course in Number Theory}.
\newblock Oxford science publications, \bibinfo{publisher}{Clarendon Press}.
%Type = Article
\bibitem[{Schicho(2000)}]{Sc00}
\bibinfo{author}{Schicho, J.}, \bibinfo{year}{2000}.
\newblock \bibinfo{title}{Proper parametrization of real tubular surfaces}.
\newblock \bibinfo{journal}{Journal of Symbolic Computation}
  \bibinfo{volume}{30}, \bibinfo{pages}{583--593}.
%Type = Article
\bibitem[{Sendra et~al.(2017)Sendra, Sevilla and Villarino}]{SENDRA20171}
\bibinfo{author}{Sendra, J.R.}, \bibinfo{author}{Sevilla, D.},
  \bibinfo{author}{Villarino, C.}, \bibinfo{year}{2017}.
\newblock \bibinfo{title}{Algebraic and algorithmic aspects of radical
  parametrizations}.
\newblock \bibinfo{journal}{Computer Aided Geometric Design}
  \bibinfo{volume}{55}, \bibinfo{pages}{1--14}.
%Type = Book
\bibitem[{Sendra et~al.(2007)Sendra, Winkler and Perez-Diaz}]{SeWiDi07}
\bibinfo{author}{Sendra, J.R.}, \bibinfo{author}{Winkler, F.},
  \bibinfo{author}{Perez-Diaz, S.}, \bibinfo{year}{2007}.
\newblock \bibinfo{title}{Rational Algebraic Curves: A Computer Algebra
  Approach, Series: Algorithms and Computation in Mathematics, Vol. 22}.
\newblock \bibinfo{publisher}{Springer Verlag}.
%Type = Book
\bibitem[{Shafarevich(2013)}]{Sch13}
\bibinfo{author}{Shafarevich, I.}, \bibinfo{year}{2013}.
\newblock \bibinfo{title}{Algebraic Geometry I: Algebraic Curves, Algebraic
  Manifolds and Schemes}.
\newblock Algebraic Geometry, \bibinfo{publisher}{Springer-Verlag (third
  edition)}.
%Type = Article
\bibitem[{Vr{\v{s}}ek and L{\'a}vi{\v{c}}ka(2015)}]{VrLa2015}
\bibinfo{author}{Vr{\v{s}}ek, J.}, \bibinfo{author}{L{\'a}vi{\v{c}}ka, M.},
  \bibinfo{year}{2015}.
\newblock \bibinfo{title}{Determining surfaces of revolution from their
  implicit equations}.
\newblock \bibinfo{journal}{Journal of computational and applied mathematics}
  \bibinfo{volume}{290}, \bibinfo{pages}{125--135}.

\end{thebibliography}
\end{document}